\crefname{equation}{}{}
\newtheorem{theorem}{Theorem}[section]
\newtheorem{lemma}[theorem]{Lemma}
\newtheorem{corollary}[theorem]{Corollary}
\newtheorem*{conjecture*}{Conjecture}
\theoremstyle{definition}
\theoremstyle{remark}
\newtheorem*{remark}{Remark}
\newtheorem*{example}{Example}
\numberwithin{equation}{section}
\newcommand{\Z}{\mathbb Z}
\title[Hook lengths in self-conjugate partitions]{Hook lengths in self-conjugate partitions}
\date{\today}
\thanks{2020 {\it{Mathematics Subject Classification.}} {05A15, 05A17, 11P81, 11P83}}
\keywords{self-conjugate partitions, hook lengths, $t$-core, $t$-quotient}
\author{Tewodros Amdeberhan, George E. Andrews, Ken Ono \and Ajit Singh}
\address{Dept. of Mathematics, Tulane University, New Orleans, LA 70118}
\email{tamdeber@tulane.edu}
\address{Dept. of Mathematics, Penn State University, University Park, PA 16802}
\email{gea1@psu.edu}
\address{Dept. of Mathematics, University of Virginia, Charlottesville, VA 22904}
\email{ko5wk@virginia.edu}
\email{ajit18@iitg.ac.in}
\begin{document}
\begin{abstract} 
In 2010, G.-N. Han obtained the generating function for the number of size $t$ hooks among integer partitions.  Here we obtain these generating functions for self-conjugate partitions, which are particularly elegant for even $t$. If $n_t(\lambda)$ is the number of size $t$ hooks in a partition $\lambda$ and $\mathcal{SC}$ denotes the set of self-conjugate partitions, then for even $t$ we have
$$\sum_{\lambda\in \mathcal{SC}} x^{n_t(\lambda)} q^{\vert\lambda\vert}
=  (-q;q^2)_{\infty} \cdot ((1-x^2)q^{2t};q^{2t})_{\infty}^{\frac{t}{2}}.
$$
As a consequence, if $a_t^{\star}(n)$ is the number of such hooks among the self-conjugate partitions of $n,$ then for even $t$ we obtain the simple formula
$$
a_t^{\star}(n)=t\sum_{j\geq 1} q^{\star}(n-2tj),
$$
where $q^{\star}(m)$ is the number of partitions of $m$ into distinct odd parts. As a corollary, we find that $t\mid a_t^{\star}(n),$ which confirms a conjecture of Ballantine, Burson, Craig, Folsom and Wen. 
\end{abstract}

\maketitle
\section{Introduction and Statement of Results}

Integer partitions play many roles in mathematics. They are fundamental objects in combinatorics, geometry, mathematical physics, number theory,  and representation theory. In particular, Young diagrams (a.k.a. Ferrers graphs) of partitions and their hook lengths are prominent. For example, they arise in the study of class numbers of imaginary quadratic fields \cite{OnoSze}, the Seiberg-Witten theory of random partitions developed by Nekrasov and Okounkov \cite{NekOk}, Ramanujan's partition congruences \cite{GKS}, and the representation theory of the symmetric group \cite{James-Kerber}, to name a few applications. In this note, we fill a gap in the literature by counting the number of size $t$ hooks  in self-conjugate partitions. As a consequence, we  prove a recent conjecture about the congruence properties of these numbers.

To make this precise, we recall the fundamental definitions (for example, see \cite{Andrews}).
A non-increasing sequence of natural numbers $\lambda=(\lambda_1,\lambda_2,\dots,\lambda_{\ell})$ is called a \emph{partition} of $n$, denoted $\lambda\vdash n$,  where $\lambda_1$ through $\lambda_{\ell}$ are called its \emph{parts}, provided these add up to $n$. 
Let $\vert\lambda\vert:=\sum\lambda_i$ denote its \emph{size}.  Partitions are represented by \emph{Young diagrams}, where the parts are arranged in left-justified rows. The {\it conjugate} of a partition $\lambda$,
denoted by $\lambda'$, is associated to the Young diagram obtained by reflecting  the diagram for  $\lambda$ across
the main diagonal. We say that $\lambda$ is  \emph{self-conjugate} if $\lambda=\lambda'$. Here we study $\mathcal{SC},$ the set of all self-conjugate partitions.

For a given partition $\lambda$, a cell at $(i, j)$ in the Young diagram of $\lambda$ is the box in the $i^{th}$-row
from the top and the $j^{th}$-column from the left. The \emph{hook} of a cell at $(i, j)$ contains the boxes at
$\{(k, j) :\, k \geq i\} \,\cup\,  \{(i, k) :\,  k \geq j\}$. The \emph{hook length} $h$ of a box at $(i, j)$ is the number of boxes directly to the right and directly below the box, plus $1$ for the box itself. We denote the multiset of hook lengths by $\mathcal{H}(\lambda)$.

We are interested in the partition statistic
\begin{equation}
n_t(\lambda):=\#\{h\in\mathcal{H}(\lambda):\, h=t\},
\end{equation}
 the number of length $t$ hooks in a partition $\lambda.$ 
For the set of all integer partitions $\mathcal{P},$ Han (see Thm. 1.4 of \cite{Han}) obtained the bivariate generating function for $n_t(\lambda)$ as an infinite product:
\begin{align}\label{Han-Thm1.4} 
\sum_{\lambda\in\mathcal{P}}  x^{n_t(\lambda)}\, q^{\vert\lambda\vert}
=\prod_{k\geq1}\frac{(1+(x-1)q^{tk})^t}{1-q^k}=\frac{((1-x)q^t;q^t)_{\infty}^t}{(q;q)_{\infty}},
\end{align}
where we have used  the {\it $q$-Pochhammer symbol}
 \begin{equation}
 (a;q)_n:= \begin{cases} 1   \ \ \ \ \ & {\text {\rm if}}\ n=0,  \\
(1-a)(1-aq)\cdots (1-aq^{n-1}) \ \ \ \ \ &{\text {\rm if}}\ n\in \Z_+,\\
  \prod_{j=0}^{\infty}(1-aq^j) \ \ \ \ \ &{\text {\rm if}}\ n=+\infty.
  \end{cases}
 \end{equation}
Here we obtain the corresponding generating functions for the set of self-conjugate partitions $\mathcal{SC}.$

\begin{theorem} \label{vital} The following are true.

\noindent
(1) If $t$ is even, then we have 
$$
\sum_{\lambda\in \mathcal{SC}} x^{n_t(\lambda)}\, q^{\vert\lambda\vert}  
= (-q;q^2)_{\infty}\cdot ((1-x^2)q^{2t};q^{2t})_{\infty}^{\frac{t}{2}}.
$$

\noindent
(2)  If $t$ is odd, then  we have 
$$\sum_{\lambda\in \mathcal{SC}}  x^{n_t(\lambda)} \, q^{\vert\lambda\vert} =  (-q;q^2)_{\infty}\cdot H^{\star}(x;q^t)\cdot
((1-x^2)q^{2t}; q^{2t})_{\infty}^{\frac{t-1}{2}},
$$
where $H^{\star}(x;q)$ is defined in (\ref{HstarFormula}).
\end{theorem}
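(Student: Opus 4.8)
The plan is to pass from self-conjugate partitions to their $t$-cores and $t$-quotients, where both the size statistic and the hook statistic $n_t$ become transparent. Recall the classical bijection $\lambda \leftrightarrow (\kappa,(\lambda^{(0)},\dots,\lambda^{(t-1)}))$, where $\kappa$ is the $t$-core and the $\lambda^{(r)}$ form the $t$-quotient, under which $\abs{\lambda}=\abs{\kappa}+t\sum_{r=0}^{t-1}\abs{\lambda^{(r)}}$. The first step is to record, as in Han's derivation of \eqref{Han-Thm1.4}, that a cell of hook length exactly $t$ corresponds on the $t$-runner abacus to a bead together with an empty position one step below it on the same runner (equivalently, a removable $t$-rim hook). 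Reading each runner as the beta-set of $\lambda^{(r)}$, such configurations are precisely the corner cells, i.e.\ the cells of hook length $1$, of $\lambda^{(r)}$; since the $t$-core carries no hook of length divisible by $t$, this yields $n_t(\lambda)=\sum_{r=0}^{t-1}n_1(\lambda^{(r)})$, where $n_1(\mu)$ denotes the number of distinct part sizes of $\mu$. As a consistency check, this recovers \eqref{Han-Thm1.4} after substituting $\sum_\mu x^{n_1(\mu)}q^{\abs{\mu}}=((1-x)q;q)_\infty/(q;q)_\infty$ and the $t$-core generating function.

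Next I would determine how conjugation acts on this data. On the abacus, $\lambda\mapsto\lambda'$ is the reflect-and-complement involution, which sends runner $r$ to runner $t-1-r$ while conjugating the partition living on it, and which sends the $t$-core to its conjugate. Hence $\lambda\in\mathcal{SC}$ if and only if $\kappa$ is self-conjugate and $\lambda^{(r)}=(\lambda^{(t-1-r)})'$ for every $r$. Since conjugation preserves the multiset of hook lengths, we have $n_1(\mu')=n_1(\mu)$ and $\abs{\mu'}=\abs{\mu}$, so each unordered pair of runners $\{r,t-1-r\}$ is governed by a single free partition $\mu=\lambda^{(r)}$ contributing $\sum_\mu x^{2n_1(\mu)}q^{2t\abs{\mu}}=((1-x^2)q^{2t};q^{2t})_\infty/(q^{2t};q^{2t})_\infty$ to the generating function.

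With this dictionary the two cases separate according to the fixed-point structure of $r\mapsto t-1-r$ on $\{0,\dots,t-1\}$. When $t$ is even there are no fixed points, so the $t$ runners split into $t/2$ free pairs and the quotient contributes the $\tfrac t2$-th power of the pair factor above. Multiplying by the generating function for self-conjugate $t$-cores, which for even $t$ equals $(-q;q^2)_\infty(q^{2t};q^{2t})_\infty^{t/2}$, the factors $(q^{2t};q^{2t})_\infty^{t/2}$ cancel and leave exactly the product in part~(1). When $t$ is odd the middle runner $r_0=(t-1)/2$ is fixed, which forces $\lambda^{(r_0)}$ to be self-conjugate; it contributes $\sum_{\nu\in\mathcal{SC}}x^{n_1(\nu)}q^{t\abs{\nu}}$, while the remaining $(t-1)/2$ pairs contribute as before. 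Combining with the odd self-conjugate $t$-core generating function and absorbing the middle-runner sum together with the residual Pochhammer factor into $H^\star(x;q^t)$ then gives part~(2).

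The main obstacle I anticipate is the first step: pinning down precisely the identity $n_t(\lambda)=\sum_r n_1(\lambda^{(r)})$ with the correct normalization of the beta-sets on each runner, together with the compatible description of conjugation as reflect-and-complement. Once these combinatorial dictionaries are fixed, the remaining work is routine $q$-series manipulation. The one genuinely external ingredient is the pair of self-conjugate $t$-core generating functions (of Garvan--Kim--Stanton type), whose parity dependence is exactly what produces the elegant even-$t$ formula and the extra factor $H^\star$ in the odd case.
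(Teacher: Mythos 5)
Your proposal is correct and follows essentially the same route as the paper: the Littlewood $t$-core/$t$-quotient decomposition restricted to $\mathcal{SC}$ (the paper cites P\'etr\'eolle's Theorem~\ref{LD} for the facts you re-derive via the reflect-and-complement abacus involution), the identification $n_t(\lambda)=\sum_r n_1(\lambda^{(r)})$, Han's formula at $t=1$ for each conjugate pair of quotient components, the Garvan--Kim--Stanton generating functions for self-conjugate $t$-cores, and the absorption of the middle-runner sum into $H^{\star}$ in the odd case. The only caveat is that your part~(2) is complete only modulo an explicit formula for $\sum_{\nu\in\mathcal{SC}}x^{n_1(\nu)}q^{\vert\nu\vert}$, which the paper supplies separately as Theorem~\ref{newconj}.
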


\begin{remark}
The $q$-series $H^{\star}(x;q)$ is derived from the generating function for the number of 1-hooks in self-conjugate partitions by the formula
\begin{equation}\label{Hstar}
H^{\star}(x;q):=\frac{1}{{(-q;q^{2})_{\infty}}}\cdot \sum_{\lambda\in \mathcal{SC}}  x^{n_1(\lambda)} q^{\vert\lambda\vert}.
\end{equation}
The formula in (\ref{HstarFormula}) uses Theorem~\ref{newconj} (2), which offers an explicit expression for the inner sum.
\end{remark}

Motivated by work of Ballantine, Burson, Craig, Folsom and Wen \cite{BBCFW} and Craig, Dawsey and Han \cite{CDH}, we study
\begin{equation}
a_t^{\star}(n):=\sum_{\substack {\lambda \in \mathcal{SC}\\ \lambda \vdash n}} n_t(\lambda),
\end{equation}
the number of size $t$ hook lengths in the self-conjugate partitions of $n$.
We obtain the generating function for $a_t^{\star}(n),$ which gives formulas in terms of $sc(m)$, the number of self-conjugate partitions of $m$ (resp. $q^{\star}(m)$, the number of partitions of $m$ into distinct odd parts).

\begin{theorem}\label{ExactFormula} The following are true.

\noindent
(1) If $t$ is even, then we have 
$$\sum_{n\geq1} a_t^{\star}(n)\, q^n=t \cdot \frac{q^{2t}\cdot (-q;q^2)_{\infty}}{1-q^{2t}}.
$$
Furthermore, we have 
$$a_t^{\star}(n)=t\sum_{j\geq 1} sc(n-2tj)=t\sum_{j\geq 1} q^{\star}(n-2tj).
$$

\noindent
(2)  If $t$ is odd, then we have 
$$
\sum_{n\geq1} a_t^{\star}(n) \, q^n=\frac{q^t(1+(t-1)q^t+tq^{2t})}{(1-q^{2t})(1+q^t)}\cdot (-q;q^2)_{\infty}.
$$
Furthermore,  we have 
\begin{displaymath}
\begin{split}
a_t^{\star}(n)&=\sum_{j\geq 1}\left((-1)^{j-1}j \cdot  sc(n-tj)+t\cdot  sc(n-2tj)\right)\\
&=\sum_{j\geq 1}\left((-1)^{j-1}j \cdot q^{\star}(n-tj)+t\cdot q^{\star}(n-2tj)\right).
\end{split}
\end{displaymath} 
\end{theorem}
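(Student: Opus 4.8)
The plan is to read off $\sum_{n\geq 1} a_t^{\star}(n)\,q^n$ directly from the bivariate generating functions of Theorem~\ref{vital} by applying the operator $\partial_x|_{x=1}$. Since $\partial_x x^{n_t(\lambda)}\big|_{x=1} = n_t(\lambda)$, differentiating $\sum_{\lambda\in\mathcal{SC}} x^{n_t(\lambda)} q^{|\lambda|}$ term by term with respect to $x$ and setting $x=1$ yields $\sum_{\lambda\in\mathcal{SC}} n_t(\lambda) q^{|\lambda|} = \sum_{n\geq1} a_t^{\star}(n)\,q^n$ as formal power series in $q$. The two closed forms then follow by a direct computation, and the explicit formulas for $a_t^{\star}(n)$ come from expanding the resulting rational functions as power series and multiplying by $(-q;q^2)_{\infty}$.

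For part (1), write the even-$t$ generating function as $(-q;q^2)_{\infty}\cdot G(x)$ with $G(x) := ((1-x^2)q^{2t};q^{2t})_{\infty}^{t/2}$. As the constant factor is free of $x$, I would use logarithmic differentiation, obtaining $G'(x)/G(x) = \frac{t}{2}\sum_{k\geq1}\frac{2xq^{2tk}}{1-(1-x^2)q^{2tk}}$. The crucial simplification is that at $x=1$ the argument $(1-x^2)q^{2tk}$ vanishes, so $G(1)=1$ and $G'(1) = t\sum_{k\geq1}q^{2tk} = tq^{2t}/(1-q^{2t})$, giving $\sum_n a_t^{\star}(n)q^n = t\,q^{2t}(-q;q^2)_{\infty}/(1-q^{2t})$. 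Expanding $q^{2t}/(1-q^{2t}) = \sum_{j\geq1}q^{2tj}$ and invoking the classical identity $(-q;q^2)_{\infty} = \sum_m sc(m)q^m = \sum_m q^{\star}(m)q^m$ (the diagonal-hook bijection between self-conjugate partitions and partitions into distinct odd parts), the coefficient of $q^n$ is $a_t^{\star}(n) = t\sum_{j\geq1} sc(n-2tj) = t\sum_{j\geq1}q^{\star}(n-2tj)$.

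For part (2), I would apply the product rule to the three factors $(-q;q^2)_{\infty}$, $H^{\star}(x;q^t)$, and $C(x):=((1-x^2)q^{2t};q^{2t})_{\infty}^{(t-1)/2}$. The factor $C$ is handled exactly as in part (1), giving $C(1)=1$ and $C'(1)=(t-1)q^{2t}/(1-q^{2t})$. For the remaining factor I need $H^{\star}(1;q^t)=1$, which is immediate from \eqref{Hstar} because $\sum_{\lambda\in\mathcal{SC}}q^{|\lambda|}=(-q;q^2)_{\infty}$, together with $\partial_x H^{\star}(x;q)|_{x=1}$, which I would read off from the explicit expression \eqref{HstarFormula} supplied by Theorem~\ref{newconj}(2) and then evaluate at $q^t$. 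Since $(-q;q^2)_{\infty}$ carries no $x$, only two product-rule terms survive at $x=1$, namely $(-q;q^2)_{\infty}\,(\partial_x H^{\star})(1;q^t)$ and $(-q;q^2)_{\infty}\,C'(1)$; adding these and clearing denominators, the numerator collapses to $q^t(1+(t-1)q^t+tq^{2t})$, producing the claimed closed form.

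Finally, to recover the explicit formula I would decompose the rational factor as $q^t/(1+q^t)^2 + tq^{2t}/(1-q^{2t})$ and expand each summand via $q^t/(1+q^t)^2 = \sum_{j\geq1}(-1)^{j-1}j\,q^{tj}$ and $q^{2t}/(1-q^{2t}) = \sum_{j\geq1}q^{2tj}$; multiplying by $(-q;q^2)_{\infty} = \sum_m q^{\star}(m)q^m$ and reading off the coefficient of $q^n$ yields $a_t^{\star}(n) = \sum_{j\geq1}\big((-1)^{j-1}j\,q^{\star}(n-tj) + t\,q^{\star}(n-2tj)\big)$, with the $sc$ form following identically from $sc=q^{\star}$. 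I expect the main obstacle to be part (2): both the evaluation of $\partial_x H^{\star}(x;q)|_{x=1}$ from \eqref{HstarFormula} and the algebraic simplification that merges the two surviving product-rule contributions into a single clean rational function require care, whereas the even case and all the coefficient extractions are routine.
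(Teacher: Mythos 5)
Your proposal is correct and follows exactly the paper's approach: differentiate the bivariate generating functions of Theorem~\ref{vital} with respect to $x$, set $x=1$, and extract coefficients using $(-q;q^2)_{\infty}=\sum_m sc(m)q^m=\sum_m q^{\star}(m)q^m$. In fact you supply more detail than the paper does for part (2) --- the computation $\partial_x H^{\star}(x;q)\big|_{x=1}=\frac{q(1+q^2)}{(1-q^2)(1+q)}$ from \eqref{HstarFormula}, the merging of the two product-rule terms into $\frac{q^t(1+(t-1)q^t+tq^{2t})}{(1-q^{2t})(1+q^t)}$, and the partial-fraction split $\frac{q^t}{(1+q^t)^2}+\frac{tq^{2t}}{1-q^{2t}}$ all check out.
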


\begin{example} Figure~\ref{hooks_in_SC} gives the self-conjugate partitions of 16, and Table~\ref{a_t(16)} lists the values of $a_t^{\star}(16).$

\begin{figure}[H]
		\begin{center}
	\includegraphics[height=47mm]{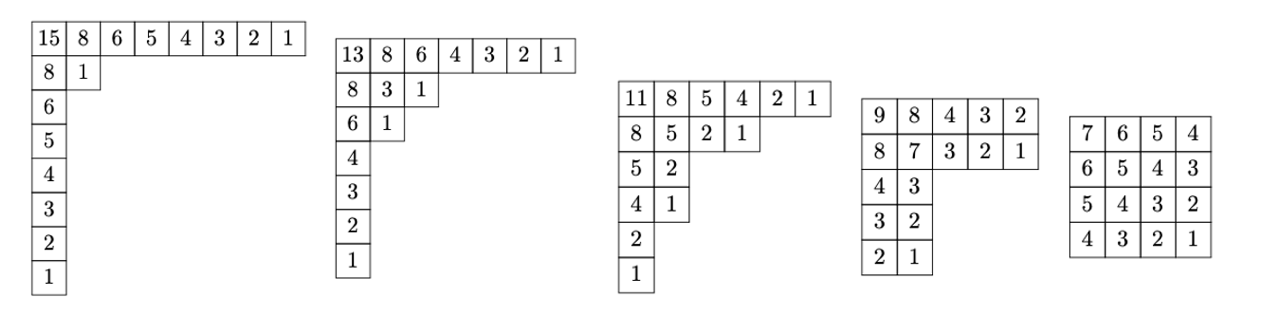}
	\end{center}
		\caption{Hook lengths of the self-conjugate partitions of $16$}
                \label{hooks_in_SC}
	\end{figure}

	\begin{table}[H]
		\begin{tabular}{|c|c|c|c|c|c|c|c|c|c|c|c|c|c|c|c|c|}
			\hline$t$ &1 & 2 & 3 & 4 & 5 & 6 & 7 & 8 & 9 & 10 & 11 & 12& 13 &14&15 & $\geq 16$ \\
			\hline$a_t^{\star}(16)$ & 14 & 14 & 12 & 12 & 8 & 6 & 2 & 8 & 1 &0 &1 &0 &1&0&1&0\\
			\hline
		\end{tabular}
		\caption{Values of $a_t^{\star}(16)$}
                      \label{a_t(16)}
	\end{table}
	
\noindent
The first few terms of the generating function for $sc(n)$ and $q^{\star}(n)$ are:
\begin{displaymath}
\begin{split}
\sum_{n=0}^{\infty} q^{\star}(n)q^n&=\sum_{n=0}^{\infty}sc(n)q^n=\prod_{n=0}^{\infty}(1+q^{2n+1})\\
&=1+q+q^3+q^4+q^5+q^6+q^7+2q^8+2q^9+2q^{10}+2q^{11}+3q^{12}+\cdots.
\end{split}
\end{displaymath}
Using these terms, we illustrate Theorem~\ref{ExactFormula} (1) with the identities:
\begin{displaymath}
\begin{split}
a_2^{\star}(16)&=2\left(q^{\star}(16-4)+q^{\star}(16-8)+q^{\star}(16-12)+q^{\star}(0)\right)=14,\\
a_4^{\star}(16)&=4\left(q^{\star}(16-8)+q^{\star}(16-16)\right)=12,\\
a_6^{\star}(16)&=6q^{\star}(16-12)=6,\\
a_8^{\star}(16)&=8q^{\star}(16-16)=8.
\end{split}
\end{displaymath}

\end{example}

Ballantine, Burson, Craig, Folsom and Wen made three conjectures \cite{BBCFW} (also see the last section of \cite{CDH}) about $a_t^{\star}(n).$ One of these conjectures concerns the divisibility properties of $a_t^{\star}(n),$ which is illustrated in the above example.

\begin{conjecture*}[BBCFW]
For all integers $n\geq0$ and $m\geq1$, we have 
$$a_{2m}^{\star}(n)\equiv 0 \pmod{2m}.$$
\end{conjecture*}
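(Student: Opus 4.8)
The plan is to derive this congruence directly from the exact formula established in Theorem~\ref{ExactFormula} (1), so that essentially no new work is required. Setting $t=2m$ (which is even), that theorem supplies the closed form
$$
a_{2m}^{\star}(n)=2m\sum_{j\geq 1} q^{\star}(n-4mj),
$$
where $q^{\star}(k)$ denotes the number of partitions of $k$ into distinct odd parts, and is taken to be $0$ for $k<0$.

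The key observation is that the factor $2m$ has already been pulled out explicitly, while the remaining sum is manifestly a nonnegative integer: it is a finite sum (the terms vanish once $4mj>n$) of values $q^{\star}(n-4mj)$, each of which counts a set of partitions and hence lies in $\Z_{\geq 0}$. Writing $c:=\sum_{j\geq 1} q^{\star}(n-4mj)\in\Z_{\geq 0}$, we conclude that $a_{2m}^{\star}(n)=2m\cdot c\equiv 0 \pmod{2m}$, which is exactly the claimed divisibility.

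In this formulation the conjecture is immediate, and the genuine content lies upstream in Theorem~\ref{vital} (1) and Theorem~\ref{ExactFormula} (1): the real task is to show that the generating function for size $2m$ hooks factors with a visible factor of $t=2m$. Concretely, this factor arises when one extracts $a_t^{\star}(n)$ from Theorem~\ref{vital} (1) by applying $\tfrac{\partial}{\partial x}$ at $x=1$ to $(-q;q^2)_{\infty}\cdot((1-x^2)q^{2t};q^{2t})_{\infty}^{t/2}$; the exponent $t/2$ together with the chain-rule derivative of $1-x^2$ at $x=1$ produces the overall constant $t$, and all lower-order Pochhammer factors collapse to $1$ because $1-x^2=0$ there. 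Once that factorization (equivalently, the even-$t$ product formula) is in hand, the present corollary requires only the trivial remark that a sum of partition counts is a nonnegative integer. I therefore expect the only ``obstacle'' at this stage to be routine bookkeeping — confirming that each $q^{\star}(n-4mj)\geq 0$ and that the sum terminates — rather than any substantive difficulty.
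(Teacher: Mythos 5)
Your proposal is correct and is exactly the paper's argument: the authors also deduce the conjecture immediately by setting $t=2m$ in Theorem~\ref{ExactFormula} (1) and observing that the explicit factor of $t$ in $a_t^{\star}(n)=t\sum_{j\geq 1} q^{\star}(n-2tj)$ gives the divisibility, since the remaining sum is a nonnegative integer.
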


By letting $t=2m$ in Theorem~\ref{ExactFormula} (1), we immediately obtain the following result.

\begin{corollary} \label{conj} The  BBCFW Conjecture is true. 
\end{corollary}

To prove these results, we make use of the classical Littlewood bijective decomposition which associates a $t$-core partition and a $t$-quotient to every partition $\lambda \in \mathcal{P}.$  More precisely, we use P\'etr\'eolle's \cite{Pet} work on the restriction of these bijections to self-conjugate partitions. We recall some of his work in Section 2.  In Section 3, we recall some classical partition generating functions, and we obtain two formulas for the 1-hook generating function (see Theorem~\ref{newconj})
$$
\sum_{\lambda\in \mathcal{SC}}  x^{n_1(\lambda)} q^{\vert\lambda\vert}.
$$
We then combine these results with the restricted Littlewood decomposition to prove Theorems~\ref{vital} and ~\ref{ExactFormula}.

\section*{Acknowledgements}
\noindent
The second author is partially supported by Simon Foundation Grant 633284. The third author thanks the Thomas Jefferson Fund and the NSF
(DMS-2002265 and DMS-2055118). The fourth author thanks the support of a Fulbright Nehru Postdoctoral Fellowship. Moreover, the fourth author thanks his Ph.D. advisor Rupam Barman and mentors Eknath Ghate and Anupam Saikia for their support and lifelong encouragement.

\section{The refined Littlewood decomposition}

The Littlewood partition bijections \cite{Littlewood}, one for every positive integer $t$,  are fundamental combinatorial tools in the representation theory of the symmetric group. Given $t$, the bijection associates partitions of $n$ to a pair consisting of a $t$-core and a $t$-quotient. The combinatorics of these bijections underly many important generating functions (for example, see \cite{GKS, Han, James-Kerber}).
In an analogous way, the results we obtain are based on the restriction of these bijections to self-conjugate partitions.

To make this precise, suppose that $t$ is a fixed positive integer.
We say that a partition is a \emph{$t$-core} if none of its hook lengths equal $t$ (equivalently, none are multiples of $t$). Given a subset of partitions $A$, write $A_t$ for those members that are $t$-cores.  Denote the multiset of hook lengths that are multiples of $t$ by $\mathcal{H}_t(\lambda)$.

The Littlewood bijection $\phi_t$ (see \cite[Thm. 2.7.30]{James-Kerber}) associates to
every partition $\lambda\in\mathcal{P}$ a $t$-core 
$\omega\in\mathcal{P}_t$ and a \emph{$t$-quotient} $(\nu^{(0)},\nu^{ (1)},\dots,\nu^{(t-1)})\in\mathcal{P}^t$. This bijection 
$\phi_t: \mathcal{P}\rightarrow\mathcal{P}_t\times\mathcal{P}^t$ is defined by
\begin{equation}\label{phit}
\phi_t(\lambda):=(\omega;\nu^{(0)},\dots,\nu^{(t-1)}),
\end{equation}
where 
$\vert\lambda\vert=\vert\omega\vert+t\sum_{i=0}^{t-1}\vert\nu^{(i)}\vert$. 
We briefly  recall its construction.

Let $\mathcal{B}$ be the set of all bi-infinite binary sequences beginning with (resp. ending with)  $0$'s (resp.  $1$'s). We denote $c\in\mathcal{B}$ by a sequence $(c_i)_{i\in\mathbb{Z}}=\cdots c_{-2}c_{-1}c_0c_1c_2\cdots$, although such a representation is not unique (for example, simple shifts work). The \emph{canonical representation} of $c$ is indexed by choosing $c_0$ so that
$$\#\{i\leq-1: \, c_i=1\}=\#\{i\geq0: \, c_i=0\}.$$
We place a dot ``{\bf .}'' between  $c_{-1}$ and $c_0$ for notational convenience.

This set of bi-infinite sequences $\mathcal{B}$ is a convenient tool for representing  partitions.
To be precise, let $\lambda$ be a partition with Young diagram $Y.$  The boundary of $Y$ consists of a vertical line on the left, which captures the number of parts, and a horizontal line at the top, which gives the size of the largest part $\lambda_1$. We focus on the remaining piece, the right hand boundary, which generally has the shape of a ``staircase'' (a.k.a. ``rim'').  We mark each horizontal edge (resp. each vertical edge) by a $1$ (resp. by at $0$). The resulting sequence of digits obtained by traversing the rim from the bottom to the top is a binary word $u$. By appending  $0$'s (resp. $1$'s) to $u$, we obtain an element
$c_{\lambda}\in\mathcal{B}$.  Obviously, this defines a bijection $\varphi :\mathcal{P}\mapsto \mathcal{B}.$

 We refine $\varphi$ to obtain each Littlewood bijection $\phi_t.$
Given $t$, one views $c_{\lambda}=(c_i)_{i\in \Z}$ as  $t$ many interlaced subsequences dictated by the residue classes of the indices modulo $t$. Namely, one forms the subsequences $b^{(k)}:=(c_{it+k})_{i\in \Z}$ for $0\leq k \leq t-1.$ The inverse image $\varphi^{-1}(b^{(k)})$ gives a partition $\nu^{(k)},$ and the collection of these inverse images for $0\leq k\leq t-1$ defines a vector of partitions that we call the $t$-quotient of  $\lambda$. This is the second component of
$\phi_t(\lambda)$ in (\ref{phit}).

To construct the first entry of $\phi_t(\lambda),$ the $t$-core, for each $0\leq k\leq t-1,$ we replace each subword $10$ with $01$ in $b^{(k)}$ while reading from left to right. We continue until we obtain $\cdots000111\cdots,$ which we denote by $r^{(k)}$.  As above, we view the $r^{(k)}$ as subsequences of a uniquely determined $r\in \mathcal{B},$ and we let  $\omega:=\varphi^{-1}(r).$ There are two major features of this construction. It turns out that $\omega$ is always a $t$-core partition, and  that $\phi_t$ as given in (\ref{phit}) is a bijection.

In his thesis, P\'etr\'eolle studied the restriction of the $\phi_t$ to self-conjugate partitions. Here we recall one of his main results \cite[Thm. 2.1]{Pet} (see also \cite{GKS}, \cite{Wah}) which refines Littlewood's bijection to the setting of self-conjugate partitions.

\begin{theorem}\label{LD} Let $t$ be a positive integer. The Littlewood decomposition $\phi_t$ maps a self-conjugate partition $\lambda$ to $(\omega;\nu^{(0)},\dots,\nu^{(t-1)}):=(\omega;\underline{\nu})$ such that the following are true.

\noindent
\qquad (SC1) We have that $\omega\in\mathcal{SC}_t$ is a $t$-core and $(\nu^{(0)},\dots,\nu^{(t-1)})$ are partitions.

\noindent
\qquad (SC2) We have that $\nu^{(j)}=(\nu^{(t-1-j)})'$ for each $j\in\{0,1,\dots,\lfloor t/2\rfloor-1\}$.

\noindent
\qquad (SC2$\,'$) If $t$ is odd, then $\nu^{(\frac{t-1}{2})}=(\nu^{(\frac{t-1}{2})})':=\mu\in\mathcal{SC}$.
	
\noindent
\qquad (SC3) We have that
	 $$
	 \vert\lambda\vert=\begin{cases}\vert\omega\vert+2t\sum_{i=0}^{\frac{t}{2}-1}\vert\nu^{(i)}\vert \qquad \ \ \ &\text{if $t$ is even} \\                                                                  		          \vert\omega\vert+2t\sum_{i=0}^{\frac{t-3}{2}}\vert\nu^{(i)}\vert+t\vert \mu\vert \qquad  &\text{if $t$ is odd}.
                                                                 \end{cases}
$$                                                                 
                                                             
\noindent
\qquad (SC4) We have that $\mathcal{H}_t(\lambda)=t\,\mathcal{H}(\underline{\nu})$. Moreover, $\phi_t$  gives a 1-1 correspondence between these 

\qquad \qquad  multisets.
\end{theorem}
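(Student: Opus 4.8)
The plan is to prove that the Littlewood bijection $\phi_t$ is \emph{conjugation-equivariant}, and then obtain (SC1)--(SC4) by restricting to fixed points. The entire argument is driven by the way conjugation acts on the bi-infinite binary model $\mathcal{B}$.

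First I would identify the involution on $\mathcal{B}$ induced by $\lambda\mapsto\lambda'$. Reflecting the Young diagram across the main diagonal interchanges horizontal and vertical edges of the rim and reverses the direction in which the rim is traversed; in terms of the word $c_\lambda=(c_i)_{i\in\Z}$ this is the map $\sigma:c\mapsto\bar c$ defined by $\bar c_i:=1-c_{-1-i}$. I would check two things: that $\sigma$ preserves the canonical centering (a short count gives $\#\{i\le -1:\bar c_i=1\}=\#\{j\ge0:c_j=0\}$ and $\#\{i\ge0:\bar c_i=0\}=\#\{j\le-1:c_j=1\}$, so the defining balance condition for $\bar c$ is equivalent to that for $c$), and that $\varphi(\lambda')=\sigma(\varphi(\lambda))$. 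In particular $\lambda$ is self-conjugate precisely when $c_\lambda$ is $\sigma$-invariant, i.e. $c_i=1-c_{-1-i}$ for all $i$.

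Next I would compute how $\sigma$ interacts with the decomposition into residue classes. With $b^{(k)}_i=c_{it+k}$, the key index identity $-1-(it+k)=-(i+1)t+(t-1-k)$ shows that $\sigma$ carries the subsequence $b^{(k)}$ to $b^{(t-1-k)}$, acting on it by the very same reversal-and-complement involution on bi-infinite sequences. Since that involution is exactly the one realizing conjugation under $\varphi$, applying $\varphi^{-1}$ block by block yields $\phi_t(\lambda')=(\omega';(\nu^{(t-1)})',\dots,(\nu^{(0)})')$. Specializing to $\lambda=\lambda'$ forces $\nu^{(k)}=(\nu^{(t-1-k)})'$ for all $k$, which is (SC2); when $t$ is odd the fixed middle index $k=(t-1)/2$ gives a self-conjugate $\mu:=\nu^{((t-1)/2)}$, which is (SC2$\,'$). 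For (SC1) I would note that the core-extraction step (repeatedly replacing $10$ by $01$ within each block) is itself $\sigma$-equivariant, so $\sigma$-invariance of $c_\lambda$ passes to the sorted sequence $r$; hence $\omega=\varphi^{-1}(r)$ is self-conjugate, and it is a $t$-core by the general construction.

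Finally, (SC3) follows by combining the general size identity $\vert\lambda\vert=\vert\omega\vert+t\sum_{i=0}^{t-1}\vert\nu^{(i)}\vert$ with $\vert\nu^{(i)}\vert=\vert\nu^{(t-1-i)}\vert$ (conjugate partitions have equal size) and pairing $i$ with $t-1-i$: the paired terms each contribute $2t\vert\nu^{(i)}\vert$, leaving the middle term $t\vert\mu\vert$ in the odd case. Statement (SC4), that $\mathcal{H}_t(\lambda)=t\,\mathcal{H}(\underline{\nu})$ with a bijection of multisets, is the standard hook-length property of the $t$-quotient, valid for every partition, so it is inherited by the self-conjugate ones with no extra work. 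The main obstacle I anticipate is the careful bookkeeping of the canonical centering: one must ensure that $\sigma$ fixes the choice of $c_0$ for the full word and, more delicately, that after passing to the blocks $b^{(k)}$ the recentered subsequence $\varphi^{-1}(\sigma b^{(k)})$ is genuinely the conjugate $(\nu^{(t-1-k)})'$ rather than a shifted partition; verifying this compatibility of centerings (and the analogous point for the core-sorting step) is the technical heart of the proof.
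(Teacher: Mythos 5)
Your argument is correct, but note that the paper does not actually prove this statement: it is quoted verbatim from P\'etr\'eolle's thesis \cite[Thm.~2.1]{Pet} (with pointers to \cite{GKS} and \cite{Wah}), so there is no in-paper proof to compare against. What you have written is essentially the standard proof of P\'etr\'eolle's theorem: conjugation acts on the abacus word by the reversal--complement involution $\sigma(c)_i = 1-c_{-1-i}$, which preserves the canonical centering, permutes the residue blocks by $k\mapsto t-1-k$ while acting on each block by the same involution, and commutes with the $10\mapsto 01$ sorting step (a ``$10$'' at positions $j,j+1$ becomes a ``$10$'' at positions $-2-j,-1-j$), whence $\phi_t(\lambda')=(\omega';(\nu^{(t-1)})',\dots,(\nu^{(0)})')$ and (SC1), (SC2), (SC2$'$) follow by restricting to fixed points; (SC3) and (SC4) are then inherited from the unrestricted Littlewood identities. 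The one technical point you flag --- that $\varphi^{-1}$ applied to the reversed--complemented block really returns the conjugate partition and not a shift of it --- is in fact harmless for identifying the quotient components, since the partition attached to a bi-infinite word is invariant under shifts of the centering (the centering only carries the ``charge,'' which matters for reassembling the core $\omega$ from the sorted blocks $r^{(k)}$, and there your verification that $\sigma$ preserves the canonical balance condition is exactly what is needed). So your proposal is a complete and correct proof of the cited result, supplying an argument the paper deliberately omits.
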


\begin{example} We illustrate Theorem~\ref{LD} with the self-conjugate partition $\lambda=(7,7,5,4,3,2,2)$ and $t=4$.  
 The partition $\lambda=(7,7,5,4,3,2,2)\vdash 30$ produces the canonical bi-infinite binary sequence $\dots 0001100101.0101100111\dots$.
Table \ref{binary} gives the ``non-trivial" portions (i.e. without the infinitely many 0s and 1s) of the binary sequences for the respective partitions in the Littlewood decomposition.  The reader should note that the entries for $\nu^{(*)}$ and $r^{(*)}$  are in fixed arithmetic progressions modulo 4, which explains the empty spaces in the middle rows of the table.  After compiling the $r^{(*)},$ we obtain the bi-infinite sequence $\omega$, which one checks gives the $4$-core partition and $4-$quotient in Figure~\ref{LDyoung}. This establishes that $\mathcal{H}_4(\lambda)=4\mathcal{H}(\underline{\nu}),$ where
$\mathcal{H}_4(\lambda)=\{{\color{orange}12,12},{\color{green}8,8},{\color{red}4,4}\}$ and $\mathcal{H}(\underline{\nu})=\{\,{}\, \}\cup\{{\color{orange}3},{\color{green}2},{\color{red}1}\}\cup \{{\color{orange}3},{\color{green}2},{\color{red}1}\} \cup \{\,{}\,\}$.
The coloring indicates the correspondence induced by $\phi_4.$

\begin{figure}[H]
\ytableausetup{centertableaux}
\begin{ytableau}
13&*(orange)12&9&7&5&3&2 \\
*(orange)12&11&*(green)8&6&*(red)4 &2 &1 \\
9&*(green)8&5&3&1  \\
7&6&3&1 \\
5&*(red)4&1 \\
3 & 2\\
2 & 1
\end{ytableau} 
\caption{$\lambda=(7,7,5,4,3,2,2)\in\mathcal{SC}$ with hook lengths inserted}
\label{SCyoung}
\end{figure}

\begin{table}[H] 
\caption{The Littlewood decomposition in binary digits}
\label{binary}
$\begin{array}{c|ccccccccccccccccccc}
  \lambda &\cdots&1&1&0& 0&1 &  0&   1 &  \text{\bf .}  & 0   & 1  &0&1&   1 &   0  & 0&  \cdots  \\ \hline
   \nu^{(0)} &\cdots& & & & 0   &   &   &     & &0   &  &   & & 1   &  &  & \cdots \\
   \nu^{(1)} &\cdots&1 & & &    &1   &   &     & &  &1  &   & &   &0  &  & \cdots \\
 \nu^{(2)} &\cdots& &1 & &    &  & 0  &     & &  &  &0   & &  &  & 0 & \cdots  \\
   \nu^{(3)} &\cdots &  & &0  &  &  &  & 1& &   &  &   &  1 & & & & \cdots \\ \hline
r^{(0)} &\cdots& & & & 0   &   &   &     & &0   &  &   & & 1   &  &  &  \cdots \\
   r^{(1)} &\cdots&0 & & &    &1   &   &     & &  &1  &   & &   &1  &  & \cdots \\
r^{(2) }&\cdots& &0 & &    &  & 0  &     & &  &  &0   & &  &  & 1 & \cdots \\
   r^{(3)} &\cdots &  & &0  &  &  &  & 1& &   &  &   &  1 & & & & \cdots \\ \hline
  \omega  &\cdots&0&0&0& 0&1 &  0&   1 &  \text{\bf .}  & 0   & 1  &0&1&   1 &   1  & 1& \cdots  \\
 \end{array}$
\end{table}

\begin{figure}[ht!]
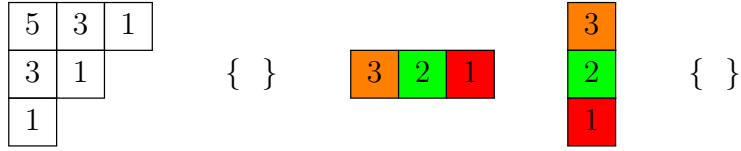

\begin{ytableau}
5&3&1 \\
3&1 \\
1
\end{ytableau}
\qquad
\{ {} \}
 \qquad
\begin{ytableau}
*(orange)3&*(green)2&*(red)1
\end{ytableau}
\qquad
\begin{ytableau}
*(orange)3 \\ *(green)2 \\ *(red)1
\end{ytableau}
\qquad
\{ {} \}
\caption{The $4$-core $\omega$ and partitions $\nu^{(0)}, \nu^{(1)} , \nu^{(2)}, \nu^{(3)}$ for $\lambda$}
\label{LDyoung}
\end{figure}
\end{example}

\section{Proofs of Theorem 1.1 and Theorem 1.2.}

Here we prove Theorems~\ref{vital} and \ref{ExactFormula}. These results follow from P\'etr\'eolle's work on Littlewood's decompositions, combined with some known partition generating functions,  and a more involved analysis in the case of $1$-hooks in self-conjugate partitions.

\subsection{Counting 1-hooks in self-conjugate partitions}

We begin by counting the number of  1-hooks in self-conjugate partitions. The next theorem offers two expressions for the relevant generating function.

\begin{theorem}\label{newconj}  The following are true.

\noindent
(1) We have that
\begin{align*}
\sum_{\lambda\in \mathcal{SC}}  x^{n_1(\lambda)} \, q^{\vert\lambda\vert} 
&=\frac{(-q;q^2)_{\infty}}{2x}\left[\left(1-\sqrt{\frac{1-x}{1+x}}\right)(-\sqrt{1-x^2};-q)_{\infty} + \left(1+\sqrt{\frac{1-x}{1+x}} \right)(\sqrt{1-x^2};-q)_{\infty}  \right]. 
\end{align*}

\vskip.05in
\noindent
(2) We have that
\begin{align*}
\sum_{\lambda\in \mathcal{SC}}  x^{n_1(\lambda)} \, q^{\vert\lambda\vert}
&= (-q;q^2)_{\infty}\cdot \left[\left(1-\frac1x\right)\sum_{n\geq0}\frac{(x^2-1)^nq^{2n^2+n}}{(q^2;q^2)_n(-q;q^2)_{n+1}}+\frac1x\,\sum_{n\geq0}\frac{(x^2-1)^nq^{2n^2-n}}{(q^2;q^2)_n(-q;q^2)_n}\right].
\end{align*}
\end{theorem}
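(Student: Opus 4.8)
The plan is to convert the whole problem into a statistic on partitions into distinct odd parts, derive the ``working'' expression (2) as one master $q$-series, and then collapse that series into the closed form (1) by Euler's theorem in the base $-q$.

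\emph{Step 1 (reduction).} The cells of hook length $1$ are exactly the removable corners of the Young diagram, so $n_1(\lambda)$ equals the number of distinct part-sizes of $\lambda$. I would encode each $\lambda\in\mathcal{SC}$ by its principal (diagonal) hook lengths $h_1>h_2>\cdots>h_d$, which form a set of distinct odd positive integers with $\lvert\lambda\rvert=\sum_i h_i$; this is the classical bijection behind $\sum_{d\ge0}q^{d^2}/(q^2;q^2)_d=(-q;q^2)_\infty$. Writing $h_i=2a_i+1$ with $a_1>\cdots>a_d\ge0$ and $\lambda_i=a_i+i$, the identity $\lambda_i-\lambda_{i+1}=(a_i-a_{i+1})-1$ shows that $\lambda$ has an off-diagonal removable corner in a row $i<d$ exactly when $h_i-h_{i+1}\ge 4$, that the last row $d$ contributes an off-diagonal removable corner exactly when $h_d\ge 3$, and that there is a removable corner on the diagonal exactly when $h_d=1$. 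Since off-diagonal corners come in mirror pairs, if $R$ denotes the number of maximal runs of consecutive odd integers among the $h_i$ (so that $R-1=\#\{i:h_i-h_{i+1}\ge4\}$), this yields the key formula $n_1(\lambda)=2R-\delta$, where $\delta=1$ if $1$ is a part of $\lambda$ and $\delta=0$ otherwise.

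\emph{Step 2 (the master series, part (2)).} I would then sum $x^{n_1(\lambda)}q^{\lvert\lambda\rvert}=x^{2R-\delta}q^{\sum_i h_i}$ over all finite sets of distinct odd parts, organized by the number of parts $d$. The consecutive differences $g_j:=a_j-a_{j+1}\ge1$ decouple, each contributing $\sum_{g\ge1}x^{2[g\ge2]}q^{2jg}=q^{2j}\bigl(1-(1-x^2)q^{2j}\bigr)/(1-q^{2j})$, while the smallest arm $a_d$ produces the dichotomy between $x$ (when $h_d=1$) and $x^2$ (when $h_d\ge3$). This collapses to a closed expression for each $d$ featuring the finite product $\bigl((1-x^2)q^2;q^2\bigr)_{d-1}$; expanding that product by the finite $q$-binomial theorem and interchanging the $d$- and $n$-summations (re-summing the resulting $d$-series in closed form) should produce exactly the two $q$-hypergeometric sums of part (2). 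In this reorganization the denominators $(-q;q^2)_n$ and $(-q;q^2)_{n+1}$ and the prefactors $1/x$ and $1-1/x$ are forced by the $\delta$-dichotomy together with the tail of the $d$-series.

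\emph{Step 3 (closed form, part (1)).} Finally I would pass from (2) to (1) analytically. Using $2n^2-n=\binom{2n}{2}$ and $2n^2+n=\binom{2n+1}{2}$, the factorizations $(q^2;q^2)_n(-q;q^2)_n=(-q;-q)_{2n}$ and $(q^2;q^2)_n(-q;q^2)_{n+1}=(-q;-q)_{2n+1}$, and the sign identity $(x^2-1)^nq^{\binom{m}{2}}=(1-x^2)^n(-q)^{\binom m2}$ valid for $m\in\{2n,2n+1\}$, the two inner sums become precisely the even- and odd-indexed parts of Euler's series $\sum_{m\ge0}w^m(-q)^{\binom m2}/(-q;-q)_m=(-w;-q)_\infty$ specialized at $w=\pm\sqrt{1-x^2}$. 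Averaging and differencing the two specializations gives $\tfrac12\bigl((-\sqrt{1-x^2};-q)_\infty\pm(\sqrt{1-x^2};-q)_\infty\bigr)$, and substituting $\sqrt{(1-x)/(1+x)}=(1-x)/\sqrt{1-x^2}$ recombines these into the stated form (1).

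I expect the main obstacle to be Step 2: correctly transcribing the run statistic $n_1=2R-\delta$ into a generating function and then carrying out the $q$-binomial expansion and the re-summation over $d$ so that the $(-q;q^2)_n$ and $(-q;q^2)_{n+1}$ denominators emerge cleanly. By contrast, once (2) is in hand, Step 3 is a short and robust application of Euler's identity in the base $-q$.
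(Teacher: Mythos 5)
Your proposal is correct, and while its combinatorial first half is essentially the paper's argument in different clothing, its analytic second half takes a genuinely different route. Encoding $\lambda\in\mathcal{SC}$ by its principal hooks $h_1>\cdots>h_d$ (distinct odd parts) is equivalent to the paper's Durfee-square-plus-twins decomposition (your $d$ is the Durfee width), your statistic $n_1(\lambda)=2R-\delta$ checks out, and the gap-variable computation you describe reproduces exactly the paper's master series
\begin{equation*}
1+x\sum_{d\geq 1}\frac{q^{d^2}\,\bigl(1-(1-x)q^{2d}\bigr)\,\bigl((1-x^2)q^2;q^2\bigr)_{d-1}}{(q^2;q^2)_{d}}.
\end{equation*}
From here the paper splits this into $F(1-x^2;q)$ and $G(1-x^2;q)$, evaluates each as an infinite product via Heine's transformation of a ${}_2\phi_1$ (Lemma~\ref{George_Lm}), obtains part (1) first, and only then deduces part (2) by Euler's theorem. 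You reverse the order and avoid Heine entirely: expanding $\bigl((1-x^2)q^2;q^2\bigr)_{d-1}$ by the finite $q$-binomial theorem, interchanging the sums, and re-summing the inner $d$-sum by Euler's identity $\sum_{m\geq 0} q^{m^2+2km}/(q^2;q^2)_m=(-q^{2k+1};q^2)_\infty=(-q;q^2)_\infty/(-q;q^2)_k$ lands exactly on the two sums of part (2); the companion sum with $q^{d^2+2d}$ produces the shifted denominator $(-q;q^2)_{k+1}$, so the interchange you flag as the main obstacle does go through. Your Step 3 is also sound: using $(q^2;q^2)_n(-q;q^2)_n=(-q;-q)_{2n}$ and $(q^2;q^2)_n(-q;q^2)_{n+1}=(-q;-q)_{2n+1}$, the two sums in (2) are precisely the even- and odd-indexed halves of $\sum_{m\geq 0}w^m(-q)^{\binom{m}{2}}/(-q;-q)_m=(-w;-q)_\infty$ at $w=\sqrt{1-x^2}$, and $(x-1)/\sqrt{1-x^2}=-\sqrt{(1-x)/(1+x)}$ recovers (1). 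What your route buys is elementary self-containedness (only the $q$-binomial theorem and Euler's identities, no ${}_2\phi_1$ machinery); what the paper's route buys is that the product form (1) appears immediately. The one step you should still write out in full detail is the re-summation in Step 2, but it works as you predict.
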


In view of (\ref{Hstar}), both identities in Theorem~\ref{newconj} can in principle be used to define the $q$-series $H^{\star}(x;q)$ required by Theorem~\ref{vital}. For convenience, we choose the second formula, and we define
\begin{equation}\label{HstarFormula}
H^{\star}(x;q):= \left[\left(1-\frac1x\right)\sum_{n\geq0}\frac{(x^2-1)^nq^{2n^2+n}}{(q^2;q^2)_n(-q;q^2)_{n+1}}+\frac1x\,\sum_{n\geq0}\frac{(x^2-1)^nq^{2n^2-n}}{(q^2;q^2)_n(-q;q^2)_n}\right].
\end{equation}
To prove Theorem~\ref{ExactFormula} (2),  
we will differentiate $H^{\star}(x;q)$ in $x$ and let $x\rightarrow 1.$ These steps would require more care had we chosen the formula in Theorem~\ref{newconj} (1) due to the vanishing of $1-x^2.$

The proof of Theorem~\ref{newconj} requires a simple observation which leads to two $q$-series. Using Young diagrams, we have that every self-conjugate partition $\lambda$ can be divided into three (possibly empty) partitions.
The Durfee square gives one partition, and the remaining cells define a pair of identical twin partitions. The columns to the right of the Durfee square give the same partition as the rows below the square.  

There are two natural types of non-empty self-conjugate partitions. We say that $\lambda$ is of {\it Type~1} if the width of the Durfee square is strictly larger than the largest part of the twins. Otherwise, we say that $\lambda$ is of {\it Type~2}, which means that the width of the Durfee square equals the largest part of the twins. Therefore, if we let $\mathcal{SC}_{(1)}$ (resp. $\mathcal{SC}_{(2)})$ denote these subsets, and also account for the empty partition of 0, then we have that
$$
\sum_{\lambda\in \mathcal{SC}}x^{n_1(\lambda)}q^{|\lambda|}= 1+D_1(x;q)+D_{2}(x;q),
$$
where
\begin{eqnarray}
D_1(x;q):=&\sum_{\lambda\in \mathcal{SC}_{(1)}}x^{n_1(\lambda)}q^{|\lambda|}\label{D1}\\
D_2(x;q):=&\sum_{\lambda\in \mathcal{SC}_{(2)}}x^{n_1(\lambda)}q^{|\lambda|}.\label{D2}
\end{eqnarray}
The proof of Theorem~\ref{newconj} boils down to an explicit determination of these separate generating functions.

\begin{example}  Figure \ref{AjitsPictures} offers two self-conjugate partitions of 17, both with $3\times3 $ Durfee squares. The Durfee squares are colored orange, and the twin partitions are colored green. The partition on the left is of Type 1, and the other is of Type 2. The 1-hooks are labelled in both Young diagrams.

		\begin{figure}[H]
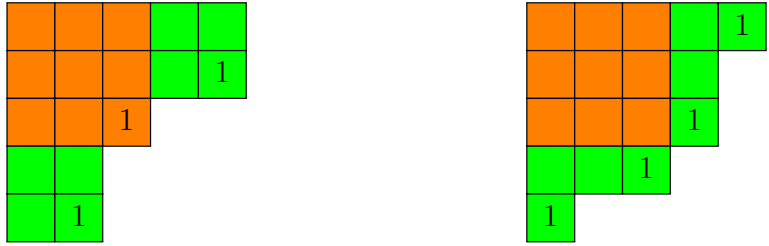

			\ytableausetup{centertableaux}
			\begin{ytableau}
				*(orange)&*(orange)&*(orange)&*(green)&*(green) \\
				*(orange)&*(orange)&*(orange)&*(green)&*(green)1 \\
				*(orange)&*(orange)&*(orange)1 \\
				*(green)&*(green)\\
				*(green)&*(green)1			
			\end{ytableau}\ \ \ \ \ \ \ \ \ \ \ \ \ \ \ \ \ \ \ \ \ \ \ \ \ \ \
			\begin{ytableau}
				*(orange)&*(orange)&*(orange)&*(green)&*(green)1 \\
				*(orange) &*(orange)&*(orange)&*(green) \\
				*(orange) &*(orange)&*(orange)&*(green)1 \\
				*(green)&*(green)&*(green)1\\
				*(green)1
			\end{ytableau} 
			\caption{Self-conjugate partitions of $17$ of Type 1 and Type 2}\label{AjitsPictures}
		\end{figure}
\end{example}

To obtain explicit expressions for these two generating functions, we require the next lemma.

\begin{lemma} \label{George_Lm} The following are true.

\noindent
 (1) If we let $F(A;q):=\sum_{n\geq0}\frac{q^{n^2}(A;q^2)_n}{(q^2;q^2)_n}$, then we have 
$$F(A;q)=\frac12 \cdot (-q;q^2)_{\infty}\left[(-\sqrt{A};-q)_{\infty}+(\sqrt{A};-q)_{\infty}\right].$$

\noindent
(2)  If we let $G(A;q):=\sum_{n\geq0}\frac{q^{n^2+2n}(A;q^2)_n}{(q^2;q^2)_n}$, then we have 
$$G(A;q)=\frac1{2\sqrt{A}} \cdot (-q;q^2)_{\infty}\left[(-\sqrt{A};-q)_{\infty}- (\sqrt{A};-q)_{\infty}\right].$$
\end{lemma}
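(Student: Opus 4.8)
The two identities have the same shape, so I would treat them in parallel via a single computational engine: the $q$-binomial theorem applied to $(A;q^2)_n$, or equivalently a direct manipulation of the inner sums into shifted theta-type series. The cleanest route is to substitute $\sqrt{A}$ and $-\sqrt{A}$ as parameters and recognize the bracketed right-hand sides as even/odd parts (in the variable $\sqrt{A}$) of a single master product. First I would define $P(z;q):=(z;-q)_{\infty}$ and observe that $\tfrac12\bigl[P(-\sqrt{A};q)+P(\sqrt{A};q)\bigr]$ collects the even powers of $\sqrt{A}$ while $\tfrac{1}{2\sqrt{A}}\bigl[P(-\sqrt{A};q)-P(\sqrt{A};q)\bigr]$ collects the odd powers (divided by $\sqrt{A}$), so in both cases the answer is manifestly a power series in $A$ with no $\sqrt{A}$ surviving, matching the left-hand sides $F$ and $G$.

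\textbf{Key steps.} The core identity I would invoke is the Cauchy/$q$-binomial expansion
\[
(z;-q)_{\infty}=\sum_{m\geq0}\frac{(-1)^m q^{\binom{m}{2}}\,z^m}{(-q;-q)_m}\,,
\]
or, better suited here, a known evaluation expressing $\sum_{n\geq0}\frac{q^{n^2}(A;q^2)_n}{(q^2;q^2)_n}$ in closed product form. Expanding $(A;q^2)_n=\sum_{k}(-1)^k q^{k(k-1)} \binom{n}{k}_{q^2} A^k$ and interchanging the order of summation, the inner sum over $n$ for fixed $k$ becomes a series of the form $\sum_{n\geq k}\frac{q^{n^2}}{(q^2;q^2)_{n-k}}$ (times a $q$-power depending on $k$), which I would re-index by $n\mapsto n+k$ and evaluate using the classical Gauss identity $\sum_{n\geq0}\frac{q^{n^2+\epsilon n}}{(q^2;q^2)_n}$ in terms of $(-q;q^2)_\infty$ or $(-1;q^2)_\infty$-type products. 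For part (1) the exponent $n^2$ pairs with the symmetric Gauss sum $\sum q^{n^2}z^n$; for part (2) the extra $q^{2n}$ shifts this to $\sum q^{(n+1)^2}z^n$, which accounts for the $\tfrac{1}{\sqrt{A}}$ normalization and the sign flip in the bracket. The final step is to repackage the resulting double product back into the factored form $(-q;q^2)_{\infty}(\mp\sqrt{A};-q)_{\infty}$, reading off that the $A$-expansion of the product is exactly the $k$-sum produced above.

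\textbf{Main obstacle.} The delicate point is the bookkeeping of signs and the appearance of $-q$ (rather than $q$) as the base inside the Pochhammer symbols on the right: the products $(\pm\sqrt{A};-q)_{\infty}$ live in base $-q$, whereas the defining sums for $F$ and $G$ use $q^{n^2}$ and $(q^2;q^2)_n$ in base $q$ and $q^2$. Reconciling these requires carefully splitting each product according to the parity of the index (even indices contribute $q^{2j}$-type factors, odd indices contribute $-q^{2j+1}$-type factors), so that $(\sqrt{A};-q)_{\infty}=(\sqrt{A};q^2)_{\infty}\,(-\sqrt{A}\,q;q^2)_{\infty}$ and similarly with signs flipped. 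I expect the real work to be verifying that after this parity split and the Gauss-sum evaluation, the $\sqrt{A}$ contributions cancel in the symmetric combination and survive with the correct single power in the antisymmetric combination, thereby confirming both closed forms simultaneously. Once the even/odd decomposition is set up correctly, the remaining manipulations are routine applications of the Gauss and $q$-binomial identities.
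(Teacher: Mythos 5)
Your strategy is sound and, after checking the details, it does go through. Expanding $(A;q^2)_n$ by the finite $q$-binomial theorem, interchanging the order of summation, and evaluating the inner sum with Euler's identity $\sum_{m\geq0} q^{m(m-1)}z^m/(q^2;q^2)_m=(-z;q^2)_\infty$ (at $z=q^{2k+1}$ for $F$ and $z=q^{2k+3}$ for $G$) yields $F(A;q)=(-q;q^2)_\infty\sum_{k\geq0}(-1)^kq^{2k^2-k}A^k/\bigl((q^2;q^2)_k(-q;q^2)_k\bigr)$, and the analogous formula for $G$ with exponent $2k^2+k$ and $(-q;q^2)_{k+1}$. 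Since $(q^2;q^2)_k(-q;q^2)_k=(-q;-q)_{2k}$ and $(-1)^kq^{k(2k-1)}=(-q)^{\binom{2k}{2}}$, these sums are precisely the even and odd parts in $\sqrt{A}$ of $(-\sqrt{A};-q)_\infty=\sum_m(-q)^{\binom{m}{2}}(\sqrt{A})^m/(-q;-q)_m$, which gives the claimed closed forms. This is a genuinely different and more elementary route than the paper's: there, $F$ and $G$ are recognized as $y\to0$ limits of ${}_2\phi_1$ series, Heine's transformation produces $(A;q^2)_\infty(-q;q^2)_\infty\sum_nA^n/(-q;-q)_{2n}$, and the other Euler identity $\sum_nz^n/(q;q)_n=1/(z;q)_\infty$ together with $(A;q^2)_\infty=(\sqrt{A};q)_\infty(-\sqrt{A};q)_\infty$ finishes the job. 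Your version trades Heine's transformation for one series interchange; both are routine, but yours requires only the two Cauchy--Euler identities. One caveat: the ``core identity'' you display, $(z;-q)_\infty=\sum_m(-1)^mq^{\binom{m}{2}}z^m/(-q;-q)_m$, has the wrong sign pattern --- substituting $Q=-q$ into $(z;Q)_\infty=\sum_m(-1)^mQ^{\binom{m}{2}}z^m/(Q;Q)_m$ gives the coefficient $(-1)^{\binom{m+1}{2}}$, not $(-1)^m$. You do not actually need that form (the computation above only uses $(-z;Q)_\infty=\sum_mQ^{\binom{m}{2}}z^m/(Q;Q)_m$ with $Q=-q$), but since you correctly single out the sign bookkeeping as the delicate point, this is exactly the spot where the writeup must be careful.
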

\begin{proof} \ \newline

\noindent
(1) By applying Heine's transformation \cite[p.19, Cor.2.3]{Andrews} to $F(A;q)$, we obtain
\begin{align*}
F(A;q)&:=\lim_{y\rightarrow0}\, \setlength\arraycolsep{1pt}
{}_2 \phi_1\left(\begin{matrix}
-\frac{q}{y}& &A& \\&y &\end{matrix}; q^2, y\right)
=\lim_{y\rightarrow0}\,\frac{(A;q^2)_{\infty}(-q;q^2)_{\infty}}{(y;q^2)_{\infty}^2}\cdot 
\setlength\arraycolsep{1pt}
{}_2 \phi_1\left(\begin{matrix}
 \frac{y}{A}& &y& \\&-q &\end{matrix}; q^2, A\right) \\
&\,\,=(A;q^2)_{\infty}\, (-q;q^2)_{\infty} \sum_{n\geq0}\frac{A^n}{(q^2;q^2)_n(-q;q^2)_n}
=(A;q^2)_{\infty}\, (-q;q^2)_{\infty} \sum_{n\geq0}\frac{A^n}{(-q;-q)_{2n}}.
\end{align*}
Hence, we obtain
\begin{align*} 
F(A;-q)&=(A;q^2)_{\infty}(q;q^2)_{\infty} \cdot \frac12\sum_{n\geq0}\frac{A^{n/2}(1+(-1)^n)}{(q;q)_{n}} \\
&=\frac12\cdot (A;q^2)_{\infty}(q;q^2)_{\infty}\cdot \left(\frac1{(\sqrt{A};q)_{\infty}}+\frac1{(-\sqrt{A};q)_{\infty}}\right) \\
&=\frac12\cdot (q;q^2)_{\infty}\left[ (-\sqrt{A};q)_{\infty}+(\sqrt{A};q)_{\infty}\right].
\end{align*}

\noindent
(2) By applying Heine's transformation to $G(A;q)$, we obtain 
\begin{align*}
G(A;q)&:=\lim_{y\rightarrow0}\, \setlength\arraycolsep{1pt}
{}_2 \phi_1\left(\begin{matrix}
- \frac{q}{y}& &A& \\&y &\end{matrix}; q^2, yq^2\right)
=\lim_{y\rightarrow0}\,\frac{(A;q^2)_{\infty}(-q^3;q^2)_{\infty}}{(y;q^2)_{\infty}(y q^2;q^2)_{\infty}}\cdot 
\setlength\arraycolsep{1pt}
{}_2 \phi_1\left(\begin{matrix}
\frac{y}{A}& &yq^2& \\&-q^3 &\end{matrix}; q^2, A\right)  \\
&\,\,=(A;q^2)_{\infty}\, (-q^3;q^2)_{\infty} \sum_{n\geq0}\frac{A^n}{(q^2;q^2)_n(-q^3;q^2)_n}
=(A;q^2)_{\infty}\, (-q;q^2)_{\infty} \sum_{n\geq0}\frac{A^n}{(-q;-q)_{2n+1}} \\
&=\frac12\cdot (A;q^2)_{\infty}(-q;q^2)_{\infty}\sum_{n\geq0}\frac{A^{(n-1)/2}(1-(-1)^n)}{(-q;-q)_n}.
\end{align*}
The same argument that completed (1) proves part (2). 
\end{proof}

\begin{proof}[Proof of Theorem~\ref{newconj}]
For self-conjugate partitions, we have that $n_1(\lambda)$ is odd if and only if $\lambda$ is of Type 1. 
This follows as 1-hooks generally come in pairs due to self-conjugation. The only counterexamples are the lower right entries of the Durfee squares of Type 1 self-conjugate partitions, which are fixed points under conjugation. Furthermore, the only 1-hooks that appear in a twin partition, say the cells to the right of the Durfee square, appear in the lower right hand entry of each rectangular block, which might consist of multiple repeated parts (see Figure~\ref{AjitsPictures}).
Therefore, \eqref{D1} and \eqref{D2} are represented as
\begin{displaymath}
\begin{split}
D_1(x;q)&=\sum_{n\geq 1}xq^{n^2}\prod_{j=1}^{n-1}(1+x^2q^{2j}+x^2q^{4j}+\cdots),\\
D_2(x;q)&=\sum_{n\geq 1}q^{n^2}(x^2q^{2n}+x^2q^{4n}+\cdots)\prod_{j=1}^{n-1}(1+x^2q^{2j}+x^2q^{4j}+\cdots).
\end{split}
\end{displaymath}
By direct algebraic manipulation, we obtain
\begin{equation}\label{gen1}
\sum_{\lambda\in \mathcal{SC}}x^{n_1(\lambda)}q^{|\lambda|}
=1+x\cdot \sum_{n\geq 1}\frac{q^{n^2}\cdot (1-(1-x)q^{2n})\cdot ((1-x^2)q^2;q^2)_{n-1}}{(q^2;q^2)_{n}}.
\end{equation}

\noindent
By \eqref{gen1} and the definition of $F$ and $G$, we obtain 
\begin{displaymath}
\begin{split}
\sum_{\lambda\in \mathcal{SC}}x^{n_1(\lambda)}q^{|\lambda|}
&=1+\frac{1}{x}\cdot \sum_{n\geq 1}\frac{q^{n^2}\cdot (1-(1-x)q^{2n})\cdot (1-x^2;q^2)_{n}}{(q^2;q^2)_{n}}\nonumber\\
&=1+\frac{1}{x}\cdot \left[(F(1-x^2;q)-1)+(x-1)\cdot (G(1-x^2;q)-1)\right]. \label{Thm3}
\end{split}
\end{displaymath}

\noindent
To prove (1) and (2), we make use of the infinite product identities in Lemma~\ref{George_Lm} for $F(x^2-1;q)$ and $G(x^2-1;q).$
This substitution immediately proves (1).
To prove (2), we take one further step and utilize Euler's summation formula for infinite products (for example, see 
\cite[(2.5)]{Andrews_Euler})
 $$
 (-z;q)_{\infty}=\sum_{n\geq0}\frac{q^{\binom{n+1}2}z^n}{(q;q)_n}. $$
\end{proof}

\subsection{Proof of Theorem~\ref{vital}}
We require the generating functions of certain arithmetic functions that enumerate self-conjugate partitions. It is an exercise to see that
\begin{equation}\label{scn}
\sum_{n\geq0}sc(n)\, q^n= (-q;q^2)_{\infty},
\end{equation}
where $sc(n)$ is the number of self-conjugate partitions of size $n.$ We also require $asc_t(n),$ the number of $t$-core partitions that are self-conjugate.
The following generating functions were derived by Garvan, Kim, and Stanton.

\begin{lemma}\cite[(7.1a) and (7.1b)]{GKS} \label{generatingSCcores} The following are true.

\noindent
(1) If $t$ is even, then we have 
$$
\sum_{n\geq0} asc_t(n)\, q^n= (-q;q^2)_{\infty}\cdot (q^{2t};q^{2t})_{\infty}^{\frac{t}{2}}. 
$$

\noindent
(2) If $t$ is odd, then we have 
$$
\sum_{n\geq0} asc_t(n)\, q^n=(-q;q^2)_{\infty}\cdot
 \frac{(q^{2t};q^{2t})_{\infty}^{\frac{t-1}{2}}}{(-q^t;q^{2t})_{\infty}}.
$$
\end{lemma}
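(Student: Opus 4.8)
The plan is to prove both parts at once by passing to the abacus (beta-set) encoding of $t$-cores and then applying the Jacobi triple product. First I would recall the classical parametrization: every $t$-core $\omega$ corresponds bijectively to a charge vector $\vec n = (n_0,\dots,n_{t-1})\in\Z^t$ subject to $\sum_i n_i = 0$, and under this correspondence
$$|\omega| = \frac{t}{2}\sum_{i=0}^{t-1} n_i^2 + \sum_{i=0}^{t-1} i\,n_i.$$
This is exactly the charge data read off from the runners of the bi-infinite sequence $c_\omega\in\mathcal B$ of Section~2: one splits $c_\omega$ into the residue classes $b^{(k)}$ modulo $t$ and records, for each runner, the signed displacement of its $0/1$ interface from the canonical dot.

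Second, I would determine how conjugation acts on $\vec n$. Reflecting the Young diagram across the diagonal reverses the rim and interchanges horizontal and vertical steps, so on the sequence it is the involution $c_i\mapsto 1-c_{-1-i}$. Writing $i = mt+k$ gives $-1-i = (-m-1)t + (t-1-k)$, so this involution carries runner $k$ to runner $t-1-k$ while reversing and complementing it, and a reversal-complement of a single runner negates its charge. Hence conjugation sends $\vec n\mapsto(-n_{t-1-k})_k$, and $\omega$ is self-conjugate precisely when $n_k = -n_{t-1-k}$ for all $k$. In particular the free parameters are $n_0,\dots,n_{\lfloor t/2\rfloor-1}$, and when $t$ is odd the central charge is forced to vanish, $n_{(t-1)/2}=0$.

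Third, I would substitute this symmetry into the size formula. Using $n_{t-1-i}=-n_i$, the two sums collapse to a single sum over $0\le i\le s-1$ with $s=\lfloor t/2\rfloor$, yielding
$$|\omega| = t\sum_{i=0}^{s-1} n_i^2 + \sum_{i=0}^{s-1}(2i-t+1)\,n_i,$$
valid for both parities (the vanishing central term drops out when $t$ is odd). Consequently the generating function factors as a product of one-dimensional theta series,
$$\sum_{n\ge 0} asc_t(n)\,q^n = \prod_{i=0}^{s-1}\ \sum_{n_i\in\Z} q^{\,t n_i^2 + c_i n_i}, \qquad c_i := 2i-t+1.$$

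Finally, I would evaluate each factor by the Jacobi triple product in the form $\sum_{n\in\Z} z^n Q^{n^2} = (Q^2;Q^2)_\infty(-zQ;Q^2)_\infty(-z^{-1}Q;Q^2)_\infty$ with $Q=q^t$ and $z=q^{c_i}$, giving $(q^{2t};q^{2t})_\infty(-q^{t+c_i};q^{2t})_\infty(-q^{t-c_i};q^{2t})_\infty$. The bookkeeping step, which I expect to be the only real obstacle, is to check that as $i$ runs over $0,\dots,s-1$ the exponents $t+c_i=2i+1$ and $t-c_i=2t-2i-1$ together sweep out each odd number in $\{1,3,\dots,2t-1\}$ exactly once when $t$ is even, and each such odd number except $t$ itself when $t$ is odd. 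Since $\prod_{a\text{ odd},\,1\le a\le 2t-1}(-q^a;q^{2t})_\infty = (-q;q^2)_\infty$, this recombination produces $(-q;q^2)_\infty(q^{2t};q^{2t})_\infty^{t/2}$ in the even case and $(-q;q^2)_\infty(q^{2t};q^{2t})_\infty^{(t-1)/2}/(-q^t;q^{2t})_\infty$ in the odd case, which are precisely (1) and (2). An alternative route that avoids the abacus would be to invoke the refined decomposition of Theorem~\ref{LD} directly, but the theta-function argument keeps the exponent matching completely explicit.
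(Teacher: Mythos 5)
Your argument is correct, but it is worth noting that the paper does not prove this lemma at all: it is quoted directly from Garvan--Kim--Stanton \cite[(7.1a), (7.1b)]{GKS}, so what you have written is a self-contained reconstruction of the proof that the authors chose to cite rather than reproduce. Your route is essentially the original GKS one: the $\vec n$-vector (abacus/charge) parametrization of $t$-cores with $|\omega|=\tfrac t2\sum n_i^2+\sum i\,n_i$, the fact that conjugation acts by $n_i\mapsto -n_{t-1-i}$ (so that self-conjugacy forces $n_i=-n_{t-1-i}$, kills the central charge for odd $t$, and makes $\sum n_i=0$ automatic, leaving $n_0,\dots,n_{\lfloor t/2\rfloor-1}$ genuinely free), followed by Jacobi's triple product on each of the resulting independent theta sums. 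All the computational details check out: $t+c_i=2i+1$ and $t-c_i=2t-2i-1$ do sweep the odd residues in $\{1,\dots,2t-1\}$ exactly once (omitting $t$ when $t$ is odd), and the identity $\prod_{a \text{ odd},\,1\le a\le 2t-1}(-q^a;q^{2t})_\infty=(-q;q^2)_\infty$ is the correct recombination. The only point I would press you on is the conjugation step: the claim that reversal-plus-complementation of a single runner negates its charge deserves a sentence of justification in terms of the canonical indexing, since that is where the whole self-conjugacy analysis hinges; but this is a standard fact and your derivation of the runner permutation $k\mapsto t-1-k$ from $-1-i=(-m-1)t+(t-1-k)$ is right. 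In short, your proof is valid and supplies exactly the content the paper outsources to \cite{GKS}.
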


\begin{proof}[Proof of Theorem~\ref{vital}]

\noindent
(1)  Given $\lambda\in\mathcal{SC}$, Theorem~\ref{LD} associates the unique pair $(\omega;\underline{\nu})$ where $\text{core}_t(\lambda):=\omega\in\mathcal{SC}_t$. Let $\vert\underline{\nu}\vert:=\sum_{i=0}^{t-1}\vert\nu^{(i)}\vert$ and $n_1(\underline{\nu}):=\sum_{i=0}^{t-1}n_1(\nu^{(i)}).$ Thanks to Theorem~\ref{LD} (SC4), we have that
	$$n_t(\lambda)=t\sum_{i=0}^{t-1} n_1(\nu^{(i)}) .$$
	By combining this fact with Theorem~\ref{LD} (SC3), we find that
	\begin{align*}
	\sum_{\substack{\lambda\in\mathcal{SC} \\ \text{core}_t(\lambda)=\omega}}
	x^{n_t(\lambda)}\,q^{\vert\lambda\vert}
	&=q^{\vert\omega\vert}\sum_{\substack{\lambda\in\mathcal{SC} \\ \text{core}_t(\lambda)=\omega}}
	x^{n_t(\lambda)}\,q^{\vert\lambda\vert-\vert\omega\vert} 
	=q^{\vert\omega\vert} \sum_{\underline{\nu}\in\mathcal{P}^t}
	x^{ n_1(\underline{\nu})}\,q^{t\vert\underline{\nu}\vert} \, .
	\end{align*}
	Theorem~\ref{LD} (SC2) justifies that the $t$-quotient $\underline{\nu}$ is uniquely determined once we know its first $t/2$ components (be mindful that these can attain \emph{any} partition) and $\vert\nu^{(i)}\vert=\vert\nu^{(t-i-i)}\vert$ as well as $\mathcal{H}(\nu^{(i)})=\mathcal{H}(\nu^{(t-1-i)})$ for $0\leq i\leq t/2-1$
Therefore, we can break up the inner sum as follows:
	\begin{align*}
	\sum_{\omega\in\mathcal{SC}_t} q^{\vert\omega\vert}
	\sum_{\substack{\lambda\in\mathcal{SC} \\ \text{core}_t(\lambda)=\omega}}
	x^{n_t(\lambda)} \, q^{\vert\lambda\vert-\vert\omega\vert}
	&=\sum_{\omega\in\mathcal{SC}_t}q^{\vert\omega\vert} \cdot
	\left(\sum_{\nu\in\mathcal{P}} x^{2n_1(\nu)} \, q^{2t\vert\nu\vert} \right)^{\frac{t}{2}}.
       \end{align*}
Han's formula \eqref{Han-Thm1.4} then implies that
\begin{align*}
\sum_{\omega\in\mathcal{SC}_t}q^{\vert\omega\vert} \cdot
	\left(\sum_{\nu\in\mathcal{P}} x^{2n_1(\nu)} \, q^{2t\vert\nu\vert} \right)^{\frac{t}{2}}
	&=\sum_{\omega\in\mathcal{SC}_t}q^{\vert\omega\vert} \cdot
\frac{((1-x^2)q^{2t};q^{2t})_{\infty}^{\frac{t}{2}}}{(q^{2t};q^{2t})_{\infty}^{\frac{t}{2}}}.
\end{align*}
For the outer sum, one inserts Lemma~\ref{generatingSCcores}~(1) to obtain
\begin{align*}
     \sum_{\omega\in\mathcal{SC}_t}q^{\vert\omega\vert} \cdot
\frac{((1-x^2)q^{2t};q^{2t})_{\infty}^{\frac{t}{2}}}{(q^{2t};q^{2t})_{\infty}^{\frac{t}{2}}}
	&=(-q;q^2)_{\infty}\cdot (q^{2t};q^{2t})_{\infty}^{\frac{t}{2}}  \cdot 
\frac{((1-x^2)q^{2t};q^{2t})_{\infty}^{\frac{t}{2}}}{(q^{2t};q^{2t})_{\infty}^{\frac{t}{2}}}\\
&= (-q;q^2)_{\infty} \cdot ((1-x^2)q^{2t};q^{2t})_{\infty}^{\frac{t}{2}}.
	\end{align*}
	This completes the proof of (1).

\medskip
\noindent
(2) The proof is analogous to the proof of (1).  However, one must take into account  Theorem~\ref{LD} (SC2$'$) and (SC3).  In this case, the Littlewood bijection generates an extra self-conjugate partition $\mu$. Hence, we have
\begin{align*}
\sum_{\lambda\in\mathcal{SC}} x^{n_t(\lambda)}\, q^{|\lambda|}
&=\sum_{\omega\in\mathcal{SC}_t} q^{\vert\omega\vert} \sum_{\underline{\nu}\in\mathcal{P}^t}
	x^{n_1(\underline{\nu})}\,q^{t\,\vert\underline{\nu}\vert} 
=\sum_{\omega\in\mathcal{SC}_t}q^{\vert\omega\vert}
	\left(\sum_{\nu\in\mathcal{P}} x^{2n_1(\nu)} \, q^{2t\vert\nu\vert} \right)^{\frac{t-1}{2}}
\sum_{\mu\in\mathcal{SC}} x^{n_1(\mu)}\, q^{t\vert\mu\vert}.
\end{align*}
Thanks to Han's formula (\ref{Han-Thm1.4}) and Lemma~\ref{generatingSCcores}~(2),  we get
\begin{align*}
\sum_{\omega\in\mathcal{SC}_t}q^{\vert\omega\vert}
	\left(\sum_{\nu\in\mathcal{P}} x^{2n_1(\nu)} \, q^{2t\vert\nu\vert} \right)^{\frac{t-1}{2}}
&=(-q;q^2)_{\infty}\cdot \frac{((1-x^2)q^{2t};q^{2t})_{\infty}^{\frac{t-1}{2}}}{(-q^t;q^{2t})_{\infty}}
:=\frac{A_t(x;q^t)}{(-q^t;q^{2t})_{\infty}}.
\end{align*}
Thanks to Theorem~\ref{newconj} (2) and the definition of $H^{\star}(x;q)$ in (\ref{HstarFormula}), after replacing $q$ by $q^t$ we obtain
$$\sum_{\lambda\in\mathcal{SC}} x^{n_t(\lambda)}\, q^{\lambda}=A_t(x;q^t)\cdot H^{\star}(x;q^t),$$
This proves (2).
\end{proof}

\subsection{Proof of Theorem~\ref{ExactFormula}}
To prove (1), we differentiate the identity in Theorem \ref{vital} (1) with respect to $x.$  After letting $x=1,$ one obtains
$$
\sum_{n\geq1} a_t^{\star}(n)q^n=t \cdot \frac{q^{2t}\cdot (-q;q^2)_{\infty}}{1-q^{2t}}=
	t\cdot (q^{2t}+q^{4t}+q^{6t}+\cdots)\cdot (-q;q^2)_{\infty}.
$$
The second claim follows from (\ref{scn}) and the fact that the generating function for $q^{\star}(n)$ is 
$$
\sum_{n\geq0}q^{\star}(n)q^n=\prod_{n=0}^{\infty}(1+q^{2n+1})=(-q;q^2)_{\infty}.
$$
These two formulas immediately give the desired result.

\smallskip
The proof of (2) is similar to (1) and follows by differentiating the formula in Theorem~\ref{vital} (2).


\begin{thebibliography}{99}

\bibitem{Andrews} G. E.~Andrews, 
\emph{The theory of partitions}, 
Cambridge Univ. Press, Cambridge, (1998).

\bibitem{Andrews_Euler} G. E.~Andrews,
\emph{Euler's ``De partitio numerorum"},
Bull. A.M.S., Vol. {\bf 44}, no. 4, (2007), 561-573.

\bibitem{BBCFW} C.~Ballantine, H.~Burson, W.~Craig, A.~Folsom, B.~Wen,
\emph{Hook length biases and general linear partition inequalities}, 
Res. Math. Sci. {\bf 10}, 41 (2023).

\bibitem{CDH} W.~Craig, M. L.~Dawsey, G-.N.~Han,
\emph{Inequalities and asymptotics for hook numbers in restricted partitions},
preprint.

\bibitem{GKS} F.~Garvan, D.~Kim, D.~Stanton,
\emph{Cranks and $t$-cores},
Invent. Math., Vol. 101, Issue 1, (1990), 1-18.

\bibitem{Han} G.-H. ~Han,
\emph{The Nekrasov-Okounkov hook length formula: refinement, elementary proof, extension and applications},
Annales de l'Institut Fourier, Volume 60 (2010) no. 1, 1-29.

\bibitem{James-Kerber} G.~James, A.~Kerber, 
\emph{The representation theory of the symmetric group}, 
Ency. of Math. and its Appl., vol. 16, Addison-Wesley Publishing Co., Reading, Mass., 1981{\color{red}.}


\bibitem{Littlewood} D. E.~Littlewood,
\emph{Modular representations of symmetric groups},
Proc. Roy. Soc. London, Ser. A {\bf 209} (1951), 333-353.

\bibitem{NekOk} N. A.~Nekrasov, A.~Okounkov, 
\emph{Seiberg-Witten theory and random partitions, in The unity of mathematics},
Prog.  Math., Birkh\"auser Boston, 2006, vol. 244, 525-596.

\bibitem{OnoSze} K.~Ono,  L.~Sze, \emph{4-core partitions and class numbers}, Acta Arith. \textbf{65} (1997), 249-272.

\bibitem{Pet} M.~P\'etr\'eolle,
\emph{Quelques d\'eveloppements combinatoires autour des groupes de Coxeter et des partitions d'entiers},
Theses, Universit\'e Claude Bernard - Lyon I, November 2015. 

\bibitem{Wah} D.~Wahiche,
\emph{Multiplication theorems for self-conjugate partitions},
Comb. Theory, {\color{red}{\bf 2}}, (2022), $\#13$.

\end{thebibliography}
\end{document}